\documentclass[11pt]{article}
\usepackage{fullpage}

\usepackage{amsmath,amsfonts,amssymb,makeidx,graphicx,amsthm,natbib,caption,mathtools}

\usepackage{hyperref}
\hypersetup{colorlinks=true,
    citecolor=blue,
    urlcolor=blue
}

\usepackage{verbatim}
\bibliographystyle{abbrvnat}

\newtheorem{theorem}{Theorem}
\newtheorem{lemma}[theorem]{Lemma}

\counterwithin{theorem}{section}

\def\F{\overline F}
\def\Povr{\buildrel P\over\rightarrow}
\def\CS{\cal S}

\usepackage{lipsum}

\newcommand\blfootnote[1]{%
  \begingroup
  \renewcommand\thefootnote{}\footnote{#1}%
  \addtocounter{footnote}{-1}%
  \endgroup
}

\counterwithin{equation}{section}

\def\ed{\end{document}}

\begin{document}
\begin{center}
\Large{\bf Large Deviation Probabilities for Sums of Random Variables with Heavy or Subexponential Tails}
\end{center}
\centerline{Daren B. H. Cline$^*$ and Tailen Hsing$^{**}$} 
\centerline{Texas A\&M University and University of Michigan}
\vskip .5cm
\begin{center}
\begin{minipage}{10cm}
\footnotesize
{
{\bf Abstract.} Let $S_n$ be the sum of independent random variables with
distribution $F$. Under the assumption that $-\log(1-F(x))$ is slowly
varying, conditions for 
$$
   \lim_{n\to\infty}\sup_{s\ge t_n}\left|{P[S_n>s]\over n(1-F(s))}-1\right|
   =0
$$ 
are given. These conditions extend and strengthen a series of previous
results.  Additionally, a connection with subexponential distributions
is demonstrated. That is, $F$ is subexponential if and only if the
condition above holds for some $t_n$ and 
$$
   \lim_{t\to\infty}{1-F(t+x)\over 1-F(t)} = 1
   \quad\text{for each real $x$.}$$
   }
\bigskip
\end{minipage}
\end{center}

%
\blfootnote{
Key words: large deviations, regular variation,
subexponential distribution \par\hspace{.13cm}
MSC classification: 60F10, 60G50 \par
$ ^*$  Research sponsored by National Science Foundation Grant 
DMS 9101083  \par
$^{**}$Research sponsored by National Science Foundation Grant
DMS 9107507 
}

\vspace{-1cm}
\section{Subexponentiality and large deviations}
Let $X_i,\ i\ge 1$, be independent and identically distributed
random variables with distribution function $F$, and
$S_n=\sum_{i=1}^nX_i$.  Suppose $\{t_n\}$ is a sequence of
positive constants such that ${S_n/ t_n}\Povr 0$.  
It is often of interest to consider the rate
at which the large deviation probabilities $P[|S_n|>t_n]$ and
$P[S_n>t_n]$ tend to zero.  This topic is of traditional
importance in probability theory, and has numerous statistical applications.

Following Cram\'er's pioneering work (1938), the study of large
deviation probabilities at first was confined to
distributions $F$ satisfying Cram\'er's condition, namely,
for some constant $\epsilon > 0$,
\begin{align} \label{e:1.1}
\int_{-\infty}^\infty e^{cx}F(dx)<\infty\quad\text{for all
$c\in[-\epsilon,\epsilon]$}.
\end{align}
For example, \eqref{e:1.1} implies
$$
\lim_{n\to\infty} n^{-1} \log P[S_n > nx] = \sup_{\lambda\ge 0}
[\lambda x - L(\lambda)] \quad \text{for all } x > E(X_1),
$$
and
$$
\lim_{n\to\infty} n^{-1} \log P[S_n < nx] = \sup_{\lambda \le 0}
[\lambda x - L(\lambda)] \quad \text{for all } x < E(X_1),
$$
where $L(\lambda) = \log Ee^{\lambda X_1}$.

It was not until the 1960's that attention was given to
heavy--tailed distributions.  In that connection, the
most noticeable work was by Linnik (1961), Heyde (1967a,b,1968), A.V.
Nagaev (1969a,b) and S.V. Nagaev (1973).  (See also the review paper by S.V.
Nagaev (1979).) While Linnik, A. V. Nagaev, and S. V. Nagaev mostly considered
distributions with finite variance, Heyde focused on
distributions with infinite variance, including the ones
that are attracted to nonnormal stable laws.
However, their results contain a common implication for the heavy
tailed distributions they considered: namely, if $t_n$ tends to $\infty$ fast
enough then one has
\begin{align} \label{e:1.2}
0<\liminf_{n\to\infty}\dfrac{P[S_n>t_n]}{nP[X>t_n]}
\le \limsup_{n\to\infty}\dfrac{P[S_n>t_n]}{nP[X>t_n]}<\infty
\end{align}
and, in some cases, even
\begin{align} \label{e:1.3}
\lim_{n\to\infty}\dfrac{P[S_n>t_n]}{nP[X>t_n]} = 1.
\end{align}
Typically the results require at least $n(1-F(t_n))\to 0$.
Note that if this is so then \eqref{e:1.2} and \eqref{e:1.3} state that the
probabilities of large deviations for the sum and the maximum are 
asymptotically comparable.

What class of distributions can be expected to have these properties?
The most natural notion is that of subexponentiality. Specifically,
the subexponential class $\CS$ consists of those
probability distributions $F$ satisfying
\begin{align} \label{e:1.4}
&\lim_{t\to\infty} {P[X_1 + X_2 > t] \over \F(t)} \quad \text{exists
finite}
\end{align}
and 
\begin{align} \label{e:1.5}
\lim_{t\to \infty} {\F(t+x) \over \F(t)} = 1 \quad \text{for each real} \ x, 
\end{align}
where $\F(x)=1-F(x)$ and $X_1$ and $X_2$ are independent random
variables distributed according to $F$.  This class was first studied
by Chistyakov (1964) and by Chover, Ney and Wainger (1973a) in the
case $P[X \geq 0] = 1$, with application to branching processes.
Additional work is found in Embrechts and Goldie (1980) and Cline
(1987).  Extensions for the case $P[X\ge 0]<1$ are given by Willekens
(1986) and numerous references and applications are provided in
Embrechts and Goldie (1982).

Two properties of $\CS$ are of interest to us here.
First, if $F \in \CS$ then \eqref{e:1.1} fails to hold (cf.\ Embrechts and Goldie
(1982)). Second, the limit in \eqref{e:1.4} equals $2$ and furthermore
\begin{align} \label{e:1.6}
\lim_{t\to \infty} {P[S_n > t] \over n\F(t)} = 1 \quad \text{for every}\
n\ge 1
\end{align}
(cf.\ Chistyakov (1964), with extension by Willekens (1986)).
Immediately one has that, for some sequence $\{t_n\}$, \eqref{e:1.3} holds for
distributions in the subexponential family.  In
fact, we may characterize $\CS$ as follows.

\begin{theorem} \label{thm:1.1} $F \in \CS$ if and only if \eqref{e:1.5} holds and there exists
$t_n$ such that
\begin{align} \label{e:1.7}
\limsup_{n \to \infty} \sup_{s \geq t_n} {P[S_n > s] \over n\F(s)} \leq 1.
\end{align}
\par \noindent
Moreover, when $F\in\CS$ there exists $t_n$ such that
\begin{align} \label{e:1.8}
\lim_{n \to \infty} \sup_{s \geq t_n} \left| {P[S_n > s] \over n\F(s)}-1 \right|
= 0.
\end{align}
\end{theorem}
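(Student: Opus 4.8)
The plan is to prove the two forward implications together and then treat the converse, which is where the real work lies. For the forward direction, note first that if $F\in\CS$ then \eqref{e:1.5} holds by the very definition of $\CS$, and since \eqref{e:1.8} trivially implies \eqref{e:1.7} with the same sequence, it suffices to produce a $\{t_n\}$ realizing \eqref{e:1.8}; this simultaneously yields the ``moreover'' clause and the ``only if'' direction. Here I would lean on the classical fact \eqref{e:1.6}, which gives, for each \emph{fixed} $n$, that $\delta_n(T):=\sup_{s\ge T}\bigl|P[S_n>s]/(n\F(s))-1\bigr|\to 0$ as $T\to\infty$. A diagonalization then finishes the job: choose $t_n$ increasing to $\infty$ with $\delta_n(t_n)\le 1/n$, so that $\sup_{s\ge t_n}\bigl|P[S_n>s]/(n\F(s))-1\bigr|\le 1/n\to0$, which is exactly \eqref{e:1.8}.

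The substance is the converse: assuming \eqref{e:1.5} and the existence of $\{t_n\}$ satisfying \eqref{e:1.7}, I must deduce $F\in\CS$, i.e.\ verify \eqref{e:1.4}. Writing $\phi(s)=P[X_1+X_2>s]/\F(s)$, the first step is the lower bound $\liminf_{s\to\infty}\phi(s)\ge2$, which uses only \eqref{e:1.5}: from $\{X_1+X_2>s\}\supseteq\{X_1>a,\,X_2>s-a\}\cup\{X_1>s-a,\,X_2>a\}$ and inclusion--exclusion one gets $P[X_1+X_2>s]\ge 2\F(a)\F(s-a)-\F(s-a)^2$, and dividing by $\F(s)$, letting $s\to\infty$ (so $\F(s-a)/\F(s)\to1$ and $\F(s-a)\to0$) and then $a\to-\infty$ yields the claim. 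Consequently \eqref{e:1.4} \emph{fails} precisely when $\limsup_{s\to\infty}\phi(s)>2$, so it is enough to rule this out.

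Suppose, for contradiction, that $\phi(s_k)\ge 2+3\delta$ along some $s_k\to\infty$ with $\delta>0$. The key idea is to transplant this excess at the level-$2$ sum into a violation of \eqref{e:1.7} at a single fixed index. Choose $N$ so that \eqref{e:1.7} gives $\sup_{s\ge t_n}P[S_n>s]/(n\F(s))\le 1+\delta$ for all $n\ge N$, fix an even $N_0\ge N$, write $N_0=2m$, group the summands into pairs $Y_i=X_{2i-1}+X_{2i}$, and set $q=P[X>-b]$ for a large constant $b$ to be chosen. On $E_i=\{Y_i>s\}\cap\{X_j>-b\ \forall j\notin\{2i-1,2i\}\}$ one has $S_{N_0}>s-(N_0-2)b$, so Bonferroni gives
\begin{align*}
P\bigl[S_{N_0}>s-(N_0-2)b\bigr]\ \ge\ m\,\phi(s)\F(s)\,q^{N_0-2}-\binom{m}{2}\phi(s)^2\F(s)^2 .
\end{align*}
Factoring out $\phi(s)\F(s)=P[X_1+X_2>s]$ and dividing by $\F(s-(N_0-2)b)$, the bracketed factor $m q^{N_0-2}-\binom{m}{2}\phi(s)\F(s)$ tends to $m q^{N_0-2}$ because $P[X_1+X_2>s_k]\to0$ (this annihilates the second-order term regardless of the size of $\phi(s_k)$), while $\F(s_k)/\F(s_k-(N_0-2)b)\to1$ by \eqref{e:1.5}; hence
$$
\liminf_{k\to\infty}\frac{P[S_{N_0}>s_k-(N_0-2)b]}{N_0\,\F(s_k-(N_0-2)b)}\ \ge\ \frac{m}{N_0}\,(2+3\delta)\,q^{N_0-2}=\Bigl(1+\tfrac{3\delta}{2}\Bigr)q^{N_0-2}.
$$
Since $q\to1$ as $b\to\infty$ with $N_0$ fixed, I first fix $b$ large enough that the right-hand side exceeds $1+\delta$; as $s_k-(N_0-2)b\to\infty$ eventually surpasses $t_{N_0}$, this forces $\sup_{s\ge t_{N_0}}P[S_{N_0}>s]/(N_0\F(s))>1+\delta$, contradicting \eqref{e:1.7} at the admissible index $N_0\ge N$. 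Thus $\limsup_s\phi(s)\le2$, \eqref{e:1.4} holds, and $F\in\CS$.

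The main obstacle is exactly this last passage: \eqref{e:1.7} is an assertion about $n\to\infty$, whereas subexponentiality \eqref{e:1.4} concerns the fixed index $n=2$, so one cannot simply set $n=2$. The pairing device bridges the two by relocating the hypothetical level-$2$ excess to one admissible index $N_0\ge N$; the delicate points are keeping the shift $(N_0-2)b$ a fixed constant so that \eqref{e:1.5} controls the tail ratio, and arranging the $q^{N_0-2}$ loss, incurred by insisting the remaining coordinates not be too negative, to be negligible by enlarging $b$ while $N_0$ stays fixed.
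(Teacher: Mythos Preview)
Your proof is correct. The forward direction matches the paper's essentially verbatim (both invoke \eqref{e:1.6} and diagonalize to get $\sup_{s\ge t_n}|P[S_n>s]/(n\F(s))-1|\le 1/n$). For the converse, however, you take a genuinely different route. The paper first proves, via Fatou's lemma and induction, the uniform lower bound $\liminf_{x\to\infty}\F_{2n}(x)/(n\F_2(x))\ge 1$; it then lets $m=m(k)\to\infty$ along any $x_k\to\infty$ and sandwiches
\[
\frac{\F_2(x_k)}{2\F(x_k)}=\frac{m\F_2(x_k)}{\F_{2m}(x_k)}\cdot\frac{\F_{2m}(x_k)}{2m\F(x_k)}\le\frac{1+\epsilon}{1-\epsilon},
\]
so the hypothesis \eqref{e:1.7} is used along an \emph{unbounded} family of indices $2m$. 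Your argument, by contrast, works at a \emph{single} fixed even index $N_0\ge N$: a Bonferroni lower bound, built from the event that one pair $Y_i=X_{2i-1}+X_{2i}$ exceeds $s$ while the other $N_0-2$ summands stay above $-b$, transports a hypothetical excess in $\phi(s)=\F_2(s)/\F(s)$ directly into a violation of \eqref{e:1.7} at $n=N_0$. The paper's approach is cleaner in that it avoids any truncation of the left tail and yields the lower bound $\liminf\F_{2n}/(n\F_2)\ge1$ as a byproduct; your approach is arguably more elementary, requiring only independence, \eqref{e:1.5} for $F$ itself, and inclusion--exclusion, and it shows that the $\limsup$ hypothesis really need be invoked only at one sufficiently large index.
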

\vskip4mm
Our proof of Theorem \ref{thm:1.1} (in Section 4), however, does not construct
a specific choice of $\{t_n\}$ for (1.8) to hold.  Therefore our second 
goal is to consider how to choose the constants $t_n$ for a particular 
subexponential distribution. 

The subexponential distributions that we will focus on are those $F$
for which $-\log\F(x)$ is slowly varying as $x\to\infty$.
These are the distributions whose tails are on the heavy side in $\CS$, and
they include distributions that are attracted to the nonnormal stable
distributions as well as distributions with finite variance, such as the
lognormal. Large deviation results for this class are few and scattered.
For example, there are no results for distributions whose tails behave like
the Cauchy and lognormal distributions.
We show that for these distributions, it is possible to have a unified 
treatment based on a truncation argument and Markov's inequality. We not only 
obtain non-trivial extensions of results previously proved by using different 
methods, but we also fill some holes in the literature. The main result
is Theorem \ref{thm:2.1} in the next section which is followed by a
discussion on the conditions of the result. In Section 3 we give some
examples that can be derived directly from Theorem \ref{thm:2.1}, or from slight 
modifications of Theorem \ref{thm:2.1}. We also describe how existing results relate 
to the examples.

The paper does not cover ``semiexponential'' distributions, e.g.
$F(x)=1-e^{-x^\rho}$ with $0 < \rho < 1$, which are also in the
subexponential class (cf.\ Cline (1986) and Goldie and Resnick
(1988)). A.V. Nagaev (1969b) and S.V. Nagaev (1973) have considered
large deviations for special cases of semiexponential distributions.
We do not know of a unified approach that will work for all
subexponential distributions. 

For clarity, all the proofs and technical details are collected in Section 4.
\vskip5mm
\section{Main Result} For what follows, let $\psi(t)=-\log\F(t)$.
When \eqref{e:1.5} holds, $\F(\log t)$ is slowly varying and thus $\psi$ has
the representation
\begin{align} \label{e:2.1}
   \psi(t)=b(t)+\int_0^t\eta(u)du,\qquad{\rm for\ all\ } t\ge0,
\end{align}
where $\eta(t)$ and $b(t)$ are measurable and $\eta(t)\to 0$, 
$b(t)\to b\in\mathbb{R}$, as $t\to\infty$. Because of this, a represenation
like \eqref{e:2.1} will be the starting point for our theorems.

Also, define
$$\mu_1(t) = \int_{|x|\le t}xdF(x)\quad\text{and}\quad
  \mu_2(t) = \int_{|x|\le t}x^2dF(x),\quad t>0.$$

We now state the main result of this paper. 
\vskip4mm\noindent
\begin{theorem}\label{thm:2.1} Let $a=a_n(s)=\psi(s)-\log n$. Assume (2.1)
where $\eta(t)\downarrow 0$ and $b(t)$ is measurable, bounded and satisfies
\begin{align} \label{e:2.2}
   \lim_{y\to 1,t\to\infty}(b(yt)-b(t))=0.
\end{align}
Suppose $t_n$ are constants increasing to $\infty$ and
$\lambda\in(0,1)$ such that, as $n\to\infty$,
\vskip4pt
\begin{align} \label{e:2.3}
\sup_{s\ge t_n}\inf_{w\ge s/a}n\left({|\mu_1(w)|+a\mu_2(w)/s\over
s\wedge\eta^{-1}(\lambda s)}+F(-w)\right)\to 0,  
\end{align}
\vskip4pt
\begin{align} \label{e:2.4}
   \sup_{s\ge t_n} 1\vee s\eta(\lambda s)\log s\eta(\lambda s)
\end{align}
and
\begin{align} \label{e:2.5}
   \sup_{s\ge t_n}n(1\vee s\eta(\lambda s))\F(s/a)\to 0,
\end{align}
where $\eta^{-1}(\lambda s) = 1/\eta(\lambda s)$.
Then
\begin{align} \label{e:2.6}
   \lim_{n\to\infty}\sup_{s\ge t_n}\left|{P[S_n>s]\over n\F(s)}-1\right|=0.
\end{align}
\end{theorem}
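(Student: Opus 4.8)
The plan is to prove \eqref{e:2.6} by sandwiching $P[S_n>s]$ between matching bounds, each asymptotic to $n\F(s)$ and uniform over $s\ge t_n$, in accordance with the one-big-jump principle: $\{S_n>s\}$ is driven by a single summand of size near $s$, the remaining $n-1$ staying of typical order. The engine for uniformity is the representation \eqref{e:2.1}: since $\eta\downarrow0$ and $b$ satisfies \eqref{e:2.2}, for $t/s\to1$ we get $\psi(t)-\psi(s)=b(t)-b(s)+\int_s^t\eta(u)\,du\to0$ uniformly, which is the uniform form of the long-tailedness \eqref{e:1.5} and is exactly what promotes the pointwise asymptotics to the supremum in \eqref{e:2.6}.

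For the lower bound I would isolate the event that a single summand carries the excess. Fixing a slowly growing cutoff $K$, Bonferroni gives $P[S_n>s]\ge n\F(s+K)\,P[S_{n-1}>-K]-\binom{n}{2}\F(s+K)^2$, where $S_{n-1}=X_2+\cdots+X_n$. Here $S_{n-1}/t_n\Povr0$ forces $P[S_{n-1}>-K]\to1$, the quadratic term is negligible by the same estimate used for the multiple-jump term below, and the uniform version of \eqref{e:1.5} replaces $\F(s+K)$ by $(1-o(1))\F(s)$. This yields $\liminf_s P[S_n>s]/(n\F(s))\ge1$, uniformly.

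The upper bound is where truncation and Markov's inequality do the work, and it is the core of the argument. For each $s$ I would pick a truncation level $w=w(s)\ge s/a$ realizing (nearly) the infimum in \eqref{e:2.3}, discard at cost $nF(-w)$ the event that some summand falls below $-w$, and split $\{S_n>s\}$ by the number of summands exceeding $w$. The event that two or more exceed $w$ has probability at most $\binom{n}{2}\F(w)^2\le\tfrac12(n\F(s/a))^2$, negligible by \eqref{e:2.5}. The event that exactly one, say $X_1$, exceeds $w$ is $n\int_w^\infty P[S_{n-1}>s-x]\,dF(x)$; its mass concentrates where $x\approx s$, on which $P[S_{n-1}>s-x]\to1$, while the intermediate range $w<x<s$ is reabsorbed into the truncation estimate and rendered $o(n\F(s))$ using the uniform long-tailedness together with \eqref{e:2.4}, leaving the main term $(1+o(1))\,n\F(s)$. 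Finally, on the event that no summand exceeds $w$ the truncated sum has light tails, so Markov's inequality in the form $e^{-hs}(E[e^{hX_1};|X_1|\le w])^n$ applies; expanding $e^{hx}\le1+hx+\tfrac12h^2x^2e^{hw}$ for $x\le w$ gives $E[e^{hX_1};|X_1|\le w]\le1+h\mu_1(w)+\tfrac12h^2e^{hw}\mu_2(w)$, and with $1+u\le e^u$ the contribution is at most $\exp(-hs+nh\mu_1(w)+\tfrac12nh^2e^{hw}\mu_2(w)+n\F(w)+nF(-w))$. The tilt $h$ (of order $a/s$) and the truncation $w$ (near $s/a$, so that $hw=O(1)$ and $e^{hw}$ stays bounded) are then chosen so that the linear term $-hs$ overshoots $\log(n\F(s))=-a$ by a diverging margin, while the correction $nh\mu_1(w)+\tfrac12nh^2e^{hw}\mu_2(w)$ collapses to the quantity $n(|\mu_1(w)|+a\mu_2(w)/s)/(s\wedge\eta^{-1}(\lambda s))$ governed by \eqref{e:2.3} and the residual $n\F(w)+nF(-w)$ is $o(1)$ by \eqref{e:2.5} and \eqref{e:2.3}; hence this term is $o(n\F(s))$. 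Summing the three pieces gives $\limsup_s P[S_n>s]/(n\F(s))\le1$, uniformly, and with the lower bound completes \eqref{e:2.6}.

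The step I expect to be the main obstacle is the simultaneous calibration of $h$ and $w$ in the Markov estimate. The demands pull in opposite directions: $h$ must be large enough that $e^{-hs}$ beats $e^{-a}=n\F(s)$ by a diverging factor, yet the second-order term $\tfrac12nh^2e^{hw}\mu_2(w)$ must remain controlled, which both caps $hw$ (hence keeps $w$ close to $s/a$) and limits $h$ through \eqref{e:2.3}. Reconciling these is precisely the purpose of the scale $\eta^{-1}(\lambda s)$ entering \eqref{e:2.3} and of condition \eqref{e:2.4}: the cases $s\eta(\lambda s)=O(1)$ (Cauchy-like tails) and $s\eta(\lambda s)\to\infty$ (lognormal-like tails) must be handled within one choice, and verifying that the chosen $(h,w)$ keeps the exponent below $-a$ by a growing amount, uniformly in $s\ge t_n$, is the delicate technical heart best organized through the lemmas of Section 4. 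A secondary difficulty is showing the intermediate range $w<x<s$ of the single-jump term is genuinely lower order uniformly, where again boundedness of $b$ and the oscillation control \eqref{e:2.2} are indispensable.
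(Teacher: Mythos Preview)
Your lower bound is essentially the paper's: both use Bonferroni with a cutoff just above $s$ and invoke $S_{n-1}/(s\wedge\eta^{-1}(\lambda s))\Povr0$.

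The upper bound, however, is organized differently, and this is where you should compare carefully. You truncate at the \emph{low} level $w\approx s/a$ and split according to how many summands exceed $w$; the paper instead truncates at a \emph{high} level $m=(1-\epsilon')s$ just below $s$, so that the single event $\{\max X_i>m\}$ already yields the main term $n\F(m)\sim n\F(s)$, and \emph{everything else}---including the whole intermediate zone $[s/a,m]$---sits inside one Markov bound $P[S_n'>s]\le\exp\bigl(n\int_{-\infty}^m(e^{cx}-1)\,dF-cs\bigr)$ with $c=(1+\epsilon)a/s$. The integral over $[-w,s/a]$ is handled by your Taylor expansion exactly as you describe, but the integral over $[s/a,m]$ is dispatched by integration by parts and the observation that $cx-(\psi(x)-b(x))$ is \emph{convex} (because $\eta$ is nonincreasing), so its supremum on $[s/a,m]$ is attained at an endpoint. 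This convexity step, together with a carefully engineered pair $\epsilon,\epsilon'$ linked through an auxiliary scale $z_n$ (Lemma~\ref{lm:4.1}) satisfying $a/z_n\to\infty$, $z_n/(1\vee s\eta(\lambda s))\to\infty$, $a/((1\vee s\eta(\lambda s))\log z_n)\to\infty$, is the actual technical heart and is what conditions \eqref{e:2.4}--\eqref{e:2.5} are tailored to.

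In your decomposition the same intermediate zone resurfaces as the range $w<x<s$ of the ``exactly one jump'' term $n\int_w^s P[S_{n-1}^{\text{trunc}}>s-x]\,dF(x)$. You flag this as a ``secondary difficulty,'' but it is not secondary: bounding it forces you to control $\int_w^s e^{hx}\,dF(x)$ with $h\sim a/s$, which is precisely the integral the paper tames with the convexity argument. Saying it is ``reabsorbed into the truncation estimate'' using long-tailedness and \eqref{e:2.4} is not enough---long-tailedness compares $\F(s)$ with $\F(s\pm o(s))$, not with $\F(s/a)$, and the crude bound $n(\F(w)-\F(s))$ is useless since $\F(s/a)/\F(s)$ is typically unbounded. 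So your route is viable, but it does not bypass the hard step; it merely relocates it, and you have not yet supplied the argument (monotonicity of $\eta$, convexity, endpoint comparison) that closes it.
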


\vskip4mm\noindent
The conditions \eqref{e:2.3}-\eqref{e:2.5} attempt to cover a variety of situations.
While they do not offer much insight into the large deviation problem at
this stage, it is worth mentioning that they are sharp since they form the
weakest condition for \eqref{e:2.6} to hold in the case where the tail
probabilities of $F$ are regularly varying with index in $(-1,0]$
(cf.\ Theorem \ref{thm:3.3}). The following remarks are useful.

\setcounter{equation}{2} 
\renewcommand{\theequation}{2.\arabic{equation}'}

\vskip4mm\noindent
{\bf Remark 1.} Note that \eqref{e:2.3} is equivalent to
\begin{align} \label{e:2.3'}
\sup_{s\ge t_n}n\left({|\mu_1(w)|+a\mu_2(w)/s\over
s\wedge\eta^{-1}(\lambda s)}+F(-w)\right)\to 0
\text{ for some } w = w_n(s) \ge s/a. 
\end{align}
\setcounter{equation}{6}
\counterwithin{equation}{section}
\vskip4mm\noindent
{\bf Remark 2.} The conditions require
$$
nP[|X|>w_n]\to 0, \ {n\mu_1(w_n)\over t_n\wedge\eta^{-1}(\lambda t_n)}\to 0,
\text{ and } {n\mu_2(w_n)\over (t_n\wedge\eta^{-1}(\lambda t_n))^2}\to 0,
$$
for some $w_n$ and hence
$${S_n\over t_n\wedge\eta^{-1}(\lambda t_n)}\Povr 0,$$
by a standard argument using truncation and Chebyshev's inequality.
\vskip4mm\noindent
{\bf Remark 3.} Condition \eqref{e:2.4} is possible for some $t_n$ if and only if
$(t\eta(t)\log t\eta(t))/\psi(t)\to 0$, as $t\to\infty$, which
implies that $\psi$ is slowly varying. Condition \eqref{e:2.5} is possible 
for some $t_n$ if and only if $t\eta(t)\F(t/\psi(t))\to 0$. The 
following lemma gives sufficient conditions.
\vskip4mm\noindent
\begin{lemma} \label{lm:2.2}
Assume \eqref{e:2.1} with $\eta(t)\downarrow 0$ and $b(t)\to b$, as
$t\to\infty$. 
\item{(i)} If $\exp(\log^2\psi(t))$ is slowly varying then
$$\lim_{t\to\infty}{t\eta(t)\log\psi(t)\over\psi(t)}=0.$$
\item{(ii)} If
\begin{align} 
   \lim_{t\to\infty}{t\eta(t)\over\psi(t)} =0 \label{e:2.7} 
\end{align}
and
\begin{align} \label{e:2.8}   
\liminf_{t\to\infty}{\eta(t/\psi(t))\over\eta(t)}>1 
\end{align}
then
$$
   \lim_{t\to\infty}t\eta(t)\F(t/\psi(t))=0.
$$
\end{lemma}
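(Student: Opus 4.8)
The plan is to prove both parts for the absolutely continuous surrogate $\Psi(t)=\int_0^t\eta(u)\,du$ and then transfer back to $\psi$. Since $\eta\downarrow 0$, $\Psi$ is increasing, concave, and satisfies $\Psi'=\eta$ almost everywhere, whereas $\psi$ itself need not even be continuous because $b$ is only measurable; this is the reason I would work with $\Psi$. As $b(t)\to b$ and $\Psi(t)\to\infty$, one has $\psi\sim\Psi$, $\log\psi\sim\log\Psi$, and $b(t)/\Psi(t)\to 0$, so every asymptotic statement proved for $\Psi$ carries over to $\psi$. The structural fact I would lean on throughout is the monotonicity $\eta\downarrow 0$.

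For (i) I would first check that the hypothesis survives this replacement: from $(\log\psi)^2-(\log\Psi)^2=\log(\psi/\Psi)\,(\log\psi+\log\Psi)$ together with $\log(\psi/\Psi)\sim b(t)/\Psi(t)$ and $\log\Psi(t)/\Psi(t)\to 0$, the factor $\exp((\log\psi)^2)/\exp((\log\Psi)^2)\to 1$, so $\exp((\log\Psi)^2)$ is slowly varying as well. The crux is the identity
\[ \frac{t\eta(t)\log\Psi(t)}{\Psi(t)}=\tfrac12\,\frac{d}{d\log t}\,(\log\Psi(t))^2, \]
which recognizes the target quantity as half a logarithmic derivative. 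For fixed $\lambda>1$ this gives
\[ (\log\Psi(\lambda t))^2-(\log\Psi(t))^2=2\int_t^{\lambda t}\frac{\eta(s)\log\Psi(s)}{\Psi(s)}\,ds. \]
The integrand is a product of the decreasing function $\eta$ and the eventually decreasing $\log\Psi/\Psi$, hence decreasing, so it is bounded below by its value at $s=\lambda t$; this yields a lower bound equal to a fixed positive multiple of $\lambda t\,\eta(\lambda t)\log\Psi(\lambda t)/\Psi(\lambda t)$. Slow variation of $\exp((\log\Psi)^2)$ forces the left-hand side to $0$, whence $t\eta(t)\log\Psi(t)/\Psi(t)\to 0$, and transferring back delivers (i).

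For (ii) note that $\F(v)=e^{-\psi(v)}$ with $v=t/\psi(t)$, so the claim is $\log(t\eta(t))-\psi(v)\to-\infty$. Condition \eqref{e:2.7} gives $t\eta(t)=o(\psi(t))$, hence $\log(t\eta(t))<\log\psi(t)$ eventually, and it is enough to show $\psi(v)-\log\psi(t)\to\infty$. The decisive step is a lower bound on $\psi(t/\psi(t))$. Working with $\Psi$,
\[ \log\Psi(t)-\log\Psi(v)=\int_v^t\frac{\eta(u)}{\Psi(u)}\,du=\int_v^t\frac{u\eta(u)}{\Psi(u)}\,\frac{du}{u}\le\Bigl(\sup_{u\ge v}\frac{u\eta(u)}{\Psi(u)}\Bigr)\log\frac{t}{v}. \]
By \eqref{e:2.7} the supremum tends to $0$, while $\log(t/v)=\log\psi(t)$, so $\log\Psi(t)-\log\Psi(v)=o(\log\psi(t))$ and therefore $\log\psi(v)\sim\log\psi(t)$. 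This makes $\psi(v)=\psi(t)^{1-o(1)}$, which tends to infinity and dwarfs $\log\psi(t)$, giving $\psi(v)-\log\psi(t)\to\infty$. Condition \eqref{e:2.8}, $\liminf\eta(v)/\eta(t)>1$, is the companion hypothesis I would use to keep this comparison from being vacuous: forcing $\eta$ to drop by a definite factor across $[v,t]$ is what pins $\psi(v)$ to the same order as $\psi(t)$, the clean sufficient statement, rather than merely $\psi(t)^{1-o(1)}$.

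The main obstacle is precisely this lower bound on $\psi(t/\psi(t))$ in (ii). The naive route---bounding $\eta$ on $[0,v]$ below by $\eta(v)$, so $\psi(v)\ge v\eta(v)$, and then estimating $\eta(v)\ge(1+\delta)\eta(t)$ by \eqref{e:2.8}---only returns $v\eta(v)\ge(1+\delta)\,t\eta(t)/\psi(t)$, which tends to $0$ by \eqref{e:2.7} and is therefore useless; it loses the bulk of $\int_0^v\eta$. One is forced instead to exploit the near-invariance of $\log\psi$ under $t\mapsto t/\psi(t)$ as above. A secondary but unavoidable technical point, shared by both parts, is that the mere measurability of $b$ blocks direct differentiation of $\psi$ and dictates the reduction to $\Psi$; verifying $\psi\sim\Psi$, $\log\psi\sim\log\Psi$, and the transfer of the slow-variation hypothesis is routine but must be dispatched first.
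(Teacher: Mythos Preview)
Your treatment of part~(i) is the paper's proof: both pass to the smooth surrogate $\Psi$ (the paper writes $\psi_1=\Psi+B_1$), recognize the target as $\tfrac12\,(d/d\log t)(\log\Psi)^2$, integrate over a dyadic interval, bound the integrand below by its right-endpoint value via monotonicity, and invoke slow variation of $\exp((\log\Psi)^2)$.

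Part~(ii) is where you diverge. The paper differentiates $t\mapsto\psi_2(t/\psi_2(t))$ to obtain $(1-t\eta(t)/\psi_2(t))\,\eta(t/\psi_2(t))/\psi_2(t)$, and uses \eqref{e:2.7} \emph{and} \eqref{e:2.8} together to bound this below by $\eta(t)/\psi_2(t)$; integrating yields the sharp inequality $\psi(t/\psi(t))\ge\log\psi(t)+C$, whence $t\eta(t)\F(t/\psi(t))\le e^{-C}\,t\eta(t)/\psi(t)\to 0$ by \eqref{e:2.7}. Your route is more elementary: from $\log\Psi(t)-\log\Psi(v)\le\bigl(\sup_{u\ge v}u\eta(u)/\Psi(u)\bigr)\log\psi(t)=o(1)\cdot\log\psi(t)$ you get $\psi(v)\ge\psi(t)^{1-o(1)}$, which already dwarfs $\log\psi(t)$ and hence $\log(t\eta(t))$. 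This is correct, and you should notice that it uses only \eqref{e:2.7}; your closing paragraph hedges about the role of \eqref{e:2.8}, but nothing in your inequality invokes it, so you have in fact shown that hypothesis to be superfluous. The paper's method buys the explicit linear lower bound $\psi(t/\psi(t))\ge\log\psi(t)+C$, for which \eqref{e:2.8} is genuinely needed; your cruder bound $\psi(t)^{1-o(1)}$ is more than enough for the stated conclusion and, as a byproduct, strengthens the lemma.
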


\vskip4mm\noindent
{\bf Remark 4.} In particular, \eqref{e:2.7} and \eqref{e:2.8} hold if $\eta$ is monotone and
regularly varying with index $-1$. Some examples are given in Section 3.
\vskip5mm
\section{Examples and ramifications}
For our first example we consider a class of finite variance distributions 
for which the condition on $t_n$ is simply expressed. This class 
includes the lognormal distribution and some log-Weibull distributions.

\vskip4mm\noindent
\begin{theorem} \label{thm:3.1} Suppose $F$ has mean 0 and a finite $2+\delta$ moment
for  some positive $\delta$. Suppose also that \eqref{e:2.1} and \eqref{e:2.2} hold with 
$b$ bounded and measurable, $\eta(t)\downarrow 0$, $t\eta(t)$ bounded away 
from 0, and
\begin{align} \label{e:3.1}
\lim_{t\to\infty}{t\eta(t)\log t\eta(t)\over\psi(t)}=0.
\end{align}
If $\lambda\in(0,1)$ and $t_n\uparrow\infty$ satisfy
\begin{align} \label{e:3.2}
\sup_{s\ge t_n}{n\psi(s)\eta(\lambda s)\over s}\to 0,
\end{align}
as $n\to\infty$, then \eqref{e:2.6} holds.
\end{theorem}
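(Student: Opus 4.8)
The plan is to \emph{verify the three hypotheses \eqref{e:2.3}--\eqref{e:2.5} of Theorem \ref{thm:2.1}} under the assumptions of Theorem \ref{thm:3.1} and then invoke that theorem to obtain \eqref{e:2.6}. The assumptions \eqref{e:2.1}, \eqref{e:2.2}, the monotonicity $\eta(t)\downarrow0$, and the boundedness and measurability of $b$ are exactly those required by Theorem \ref{thm:2.1}, so the entire task is to check \eqref{e:2.3}, \eqref{e:2.4} and \eqref{e:2.5} for $a=a_n(s)=\psi(s)-\log n$, uniformly for $s\ge t_n$.

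First I would assemble a few preliminary facts. By Remark 3, \eqref{e:3.1} forces $\psi$ to be slowly varying; together with $\psi(t)\to\infty$ this gives $t\eta(t)=o(\psi(t))$, and since $\eta(t)\downarrow0$ one has $s\eta(\lambda s)=\lambda^{-1}(\lambda s)\eta(\lambda s)=o(\psi(s))$. The crucial quantitative input is a uniform lower bound on $a$. Evaluating \eqref{e:3.2} at $s=t_n$ and using that $t\eta(t)$ is bounded away from $0$ (so $\eta(\lambda t_n)\ge c/(\lambda t_n)$) yields $n\psi(t_n)/t_n^2\to0$, hence $t_n/\sqrt n\to\infty$. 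The finite $2+\delta$ moment then gives, by Markov's inequality, $\F(t_n)\le E|X|^{2+\delta}t_n^{-(2+\delta)}$, so $\psi(t_n)\ge(2+\delta)\log t_n-C\ge(1+\delta/2)\log n-C$, and since $a$ is nondecreasing in $s$,
\begin{align*}
\inf_{s\ge t_n} a_n(s)=\psi(t_n)-\log n\ge(\delta/2)\log n-C\to\infty .
\end{align*}
This is where the \emph{strict} $2+\delta$ moment (rather than a mere finite variance) is essential: it is what forces the excess $a\to\infty$. I would also record the trivial $a\le\psi(s)$.

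With these in hand, conditions \eqref{e:2.4} and \eqref{e:2.5} are quick. For \eqref{e:2.4} (which, cf.\ Remark 3, asks that $1\vee s\eta(\lambda s)\log(s\eta(\lambda s))$ be of smaller order than $a$), applying \eqref{e:3.1} at the point $\lambda s$ gives $s\eta(\lambda s)\log\!\big(s\eta(\lambda s)\big)=o(\psi(s))=o(a)$, as required. For \eqref{e:2.5}, rather than tracking $\psi(s/a)$ through the slow variation, I would bound the tail crudely by Markov, $\F(s/a)\le E|X|^{2+\delta}(a/s)^{2+\delta}$. Since $1\vee s\eta(\lambda s)\le C\psi(s)$ and $a\le\psi(s)$, the quantity in \eqref{e:2.5} is at most $C\,n\,\psi(s)^{3+\delta}s^{-(2+\delta)}$; as $\psi(s)=s^{o(1)}$ and $t_n/\sqrt n\to\infty$, the supremum over $s\ge t_n$ is attained at $t_n$ and is $O(n^{-\delta/2+o(1)})\to0$.

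The remaining and most substantial step is \eqref{e:2.3}, which by Remark 1 it suffices to verify for a single choice $w=w_n(s)\ge s/a$; I would take $w=s/a$. Because $EX=0$ we have $\mu_1(w)=-E[X\mathbf{1}_{|X|>w}]$, so $|\mu_1(w)|\le E|X|^{2+\delta}w^{-(1+\delta)}$, while $\mu_2(w)\le\sigma^2:=EX^2<\infty$ and $F(-w)\le E|X|^{2+\delta}w^{-(2+\delta)}$. The dominant (variance) term is then
\begin{align*}
\frac{n\,a\,\mu_2(w)/s}{s\wedge\eta^{-1}(\lambda s)}
&\le n a\sigma^2\Big(\tfrac1{s^2}\vee\tfrac{\eta(\lambda s)}{s}\Big)
\le\sigma^2\Big(\frac{n\psi(s)}{s^2}+\frac{n\psi(s)\eta(\lambda s)}{s}\Big),
\end{align*}
whose second summand is \eqref{e:3.2} and whose first is dominated by it since $\eta(\lambda s)\ge c/(\lambda s)$; hence it tends to $0$. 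The $\mu_1$ term is smaller than the variance term by the factor $(a/s)^{\delta}\to0$, and the left-tail term $nF(-s/a)\le E|X|^{2+\delta}\,n(\psi(s)/s)^{2+\delta}\to0$ exactly as in the previous paragraph. Thus all three pieces of \eqref{e:2.3} vanish uniformly, and Theorem \ref{thm:2.1} applies. The hard part throughout is the bookkeeping of uniformity in $s\ge t_n$, which I would handle by checking that each bounding function is eventually monotone in $s$ (so its supremum is attained at $s=t_n$) and reducing everything to the single inequality $t_n/\sqrt n\to\infty$ together with \eqref{e:3.2}; the one genuinely delicate input is the uniform lower bound $a\to\infty$, and once it is secured the strict $2+\delta$ moment supplies enough room in every Markov estimate to absorb the slowly varying factors $\psi(s)=s^{o(1)}$.
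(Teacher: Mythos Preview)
Your proposal is correct and follows essentially the same route as the paper: verify \eqref{e:2.3}--\eqref{e:2.5} of Theorem~\ref{thm:2.1}, with the key quantitative step being that \eqref{e:3.2}, the lower bound on $t\eta(t)$, and the $2+\delta$ moment force $\psi(t_n)\ge(1+\tfrac{\delta}{2})\log n-C$, so that $a$ is comparable to $\psi(s)$ uniformly. One small point to make explicit: your step ``$o(\psi(s))=o(a)$'' in the verification of \eqref{e:2.4} needs $a\ge c\,\psi(s)$ (not merely $a\to\infty$), and this follows from your own inequality via $\log n\le(\psi(s)+C)/(1+\delta/2)$, giving $a\ge\tfrac{\delta}{2+\delta}\psi(s)-C'$---which is exactly the paper's $\limsup\log n/\psi(s)\le 2/(2+\delta)$.
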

\vskip4mm\noindent
For example, suppose $F$ is the centered lognormal distribution
$$
F(x) = \Phi\left({\log(x+\beta)\over\sigma}\right),\qquad x>-\beta,
$$
where $\Phi$ is the standard normal distribution and $\beta=e^{\sigma^2/2}$.
Then
$$
\F(x)
 \sim{\sigma\over\sqrt{2\pi}}e^{-(\log(x+\beta))^2/2\sigma^2-\log\log(x+\beta)}
 \sim{\sigma\over\sqrt{2\pi}}e^{-(\log x)^2/2\sigma^2-\log\log x}.
$$
and we can take
$$
\eta(x)={\log x\over\sigma^2x}+{1\over x\log x}\sim{\log x\over\sigma^2x}.
$$
Hence if $t_n$ is such that
$$
 \lim_{n\to\infty}{n\log^3t_n\over t_n^2}=0
$$
then \eqref{e:2.6} holds.
\vskip4mm
If $b$ is bounded in Theorem 2.1 but does not satisfy \eqref{e:2.2} then $F$ may not
be subexponential. Nevertheless, useful limiting bounds are available. From 
the proof of Theorem 2.1 one may deduce that if all the other assumptions
are met then
\begin{align}\label{e:3.3}
e^{-\gamma}\le\liminf_{n\to\infty}\inf_{s\ge t_n}{P[S_n>s]\over n\F(s)}
  \le\limsup_{n\to\infty}\sup_{s\ge t_n}{P[S_n>s]\over n\F(s)}\le e^\gamma,
\end{align}
where 
$$
   \gamma=\lim_{y\downarrow 1}\limsup_{t\to\infty}
      \sup_{1\le x\le y}(b(xt)-b(t)).
$$

In a slightly different direction, the proof of Theorem \ref{thm:2.1} can be
specialized to handle distributions with dominated varying tails. To
that end, we recall some basic definitions (cf.\ Bingham, Goldie, and
Teugels (1987), p. 61--76, and Cline (1994)).

For a distribution $F$, $\F$ is {\it regularly varying} 
$(\F\in RV_{-\alpha})$ if, for some nonnegative $\alpha$,
$$
   \lim_{t\to\infty}{\F(\lambda t)\over\F(t)}=\lambda^{-\alpha},
   \quad\text{for all $\lambda\ge 1$}.
$$
$\F$ is {\it intermediate regularly varying} if
$$
   \lim_{\lambda\to 1,t\to\infty}{\F(\lambda t)\over\F(t)}=1.
$$
$\F$ is {\it dominated varying} if
$$
   \liminf_{t\to\infty}{\F(\lambda t)\over\F(t)}>0,
     \quad\text{for some $\lambda>1$}.
$$

\vskip4mm\noindent
\begin{theorem} \label{thm:3.2} Suppose $P[|X_1|>x]$ is dominated varying and 
$S_n/t_n\Povr 0$. If
\begin{align} \label{e:3.4}
\limsup_{n\to\infty}\sup_{s\ge t_n}{na\mu_2(s)\over s^2}=0
\end{align}
then \eqref{e:3.3} holds with
$$
   \gamma=-\log\lim_{\lambda\downarrow 1}\liminf_{t\to\infty}
      {\F(\lambda t)\over\F(t)}.
$$
Moreover, if $F$ is intermediate regular varying then \eqref{e:2.6} holds.
\end{theorem}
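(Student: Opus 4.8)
The plan is to establish \eqref{e:3.3} by sandwiching the ratio $P[S_n>s]/(n\F(s))$ between the asymptotic bounds $e^{-\gamma}$ and $e^\gamma$, uniformly for $s\ge t_n$, following the ``single big jump'' heuristic behind Theorem \ref{thm:2.1} but exploiting dominated variation to streamline the estimates. Throughout I write $a=\psi(s)-\log n$, so that $n\F(s)=e^{-a}$; since $S_n/t_n\Povr 0$ together with dominated variation forces $\sup_{s\ge t_n}n\F(s)\to 0$ (hence $a\to\infty$), the target bounds are genuinely two-sided. Dominated variation enters in the form $\F(\lambda s)\le C_\lambda\F(s)$ for each $\lambda\ge 1$, together with the rewriting $e^\gamma=\lim_{\epsilon\downarrow 0}\limsup_{t\to\infty}\F((1-\epsilon)t)/\F(t)$ of the oscillation constant.

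For the lower bound I would isolate one large summand. Fix $\epsilon>0$ and set $A_i=\{X_i>(1+\epsilon)s,\ |S_n-X_i|\le\epsilon s\}$; on $A_i$ one has $S_n>s$, so $P[S_n>s]\ge P[\bigcup_i A_i]$. By independence $P[A_i]=\F((1+\epsilon)s)\,P[|S_{n-1}|\le\epsilon s]$, and $S_{n-1}/t_n\Povr 0$ makes $\inf_{s\ge t_n}P[|S_{n-1}|\le\epsilon s]\to 1$; Bonferroni together with the pair bound $\binom n2\F((1+\epsilon)s)^2\le\tfrac12 C_{1+\epsilon}^2(n\F(s))^2=o(n\F(s))$ shows the $A_i$ are asymptotically disjoint. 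Dividing by $n\F(s)$ and letting $\epsilon\downarrow 0$ yields the $e^{-\gamma}$ bound.

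The upper bound is where the real work lies. I would split $\{S_n>s\}$ according to the maximal summand $M_n=\max_i X_i$ against the level $w=s/a$. On $\{M_n\le w\}$ the sum equals the truncated sum $\wt S_n=\sum_i X_i\mathbf 1[X_i\le w]$, whose summands are bounded by $w$, and after absorbing the centering $n\mu_1(w)$ (negligible by $S_n/t_n\Povr 0$) a Bennett/Bernstein inequality gives $P[\wt S_n>s]\le\exp(-s^2/\{2(n\mu_2(w)+ws/3)\})$. The choice $w=s/a$ makes the boundedness term $ws/3=s^2/(3a)$, while \eqref{e:3.4} forces $a\,n\mu_2(w)/s^2\le a\,n\mu_2(s)/s^2\to 0$, so the variance term is swallowed and the exponent is $\sim\tfrac32 a$; hence $P[\wt S_n>s]\le e^{-3a/2}=(n\F(s))^{3/2}=o(n\F(s))$. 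On $\{M_n>w\}$ the union bound gives $n\int_w^\infty P[S_{n-1}>s-x]\,dF(x)$, which I would break at $x=(1-\epsilon)s$: the range $x>(1-\epsilon)s$ contributes at most $n\F((1-\epsilon)s)$, whose ratio to $n\F(s)$ tends to $e^\gamma$ as $\epsilon\downarrow 0$, while on $w<x\le(1-\epsilon)s$ one needs $S_{n-1}>s-x$ with $s-x\ge\epsilon s$, a bona fide large deviation. Summing the three pieces gives $\limsup\le e^\gamma$, and when $F$ is intermediate regularly varying $\F((1\pm\epsilon)s)/\F(s)\to 1$, so $\gamma=0$ and both bounds collapse to $1$, which is \eqref{e:2.6}.

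The main obstacle is the medium-jump range $w<x\le(1-\epsilon)s$: there $s-x\in[\epsilon s,\,s-w]$, and the naive bound $n\F(w)\sup_x P[S_{n-1}>s-x]$ is not visibly $o(n\F(s))$ uniformly in $s\ge t_n$, because $\F(s/a)/\F(s)$ may grow like a power of $a$. I expect to control this term by recognizing it as a ``two large summands'' contribution---replacing $P[S_{n-1}>s-x]$ by its own one-big-jump estimate $(n-1)\F(s-x)$ and showing the resulting convolution integral is $O((n\F(s))^2)$ via dominated variation and $\sup_{s\ge t_n}n\F(s)\to 0$. The delicate point is keeping every estimate uniform over the unbounded range $s\ge t_n$, where $a=a_n(s)$ itself varies with $s$.
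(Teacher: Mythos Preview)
Your lower bound via one big jump and Bonferroni is correct and matches the paper's argument. The upper bound takes a genuinely different route---Bennett/Bernstein after truncating at $w=s/a$ and splitting on $M_n$, whereas the paper truncates at $m=(1-\epsilon')s$ and uses the exponential Markov inequality with $c=(1+\epsilon)a/s$---and as written it has a gap exactly where you flag it.

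Your proposed fix for the medium range $w<x\le(1-\epsilon)s$ is circular: bounding $P[S_{n-1}>s-x]$ by $(n-1)\F(s-x)$ presupposes the estimate you are proving, and no bootstrap is set up (nor would an induction on $n$ be straightforward, since the thresholds $t_n$ move with $n$). The non-circular argument is to decompose one level further on $N=\#\{i:X_i>w\}$. The case $N\ge 2$ costs $\binom n2\F(w)^2$; for $N=1$ with the unique large summand in $(w,(1-\epsilon)s]$, the \emph{remaining} sum (all terms $\le w$) must exceed $\epsilon s$, and your own Bernstein bound gives $e^{-c\epsilon a}$ for this. You already note that dominated variation yields $\F(s/a)/\F(s)\le Ca^{A}$, hence $n\F(w)\le Ca^{A}e^{-a}$; multiplying by either $n\F(w)$ or $e^{-c\epsilon a}$ then gives $o(e^{-a})$ with no appeal to the conclusion. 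Two smaller issues also need patching: your $\wt S_n=\sum X_i\mathbf 1[X_i\le w]$ is only cut from above, so its summands have infinite variance whenever $F(-x)$ does, and Bennett is vacuous---truncate two-sidedly instead (the negative excess only decreases $S_n$, so $P[S_n>s,\,M_n\le w]\le P[\sum X_i\mathbf 1[|X_i|\le w]>s]$, and now $\mu_2(w)$ is the right variance proxy and \eqref{e:3.4} applies); and the uniform negligibility of the centering $n\mu_1(w)/s$ over $s\ge t_n$ does not follow from $S_n/t_n\Povr 0$ alone but needs the dominated-variation bound on $P[|X|>w]$ together with \eqref{e:3.4}.

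For contrast, the paper sidesteps the medium range entirely: a single Laplace integral $\int_{-\infty}^m(e^{cx}-1)\,dF$ handles all scales, the piece $\int_{-\infty}^{-w}$ is nonpositive and dropped for free, and dominated variation enters as the representation $\psi=b+\int\eta$ with $b$ bounded and $t\eta(t)\le A$, giving $\psi(s)-\psi(s/a)\le B+A\log a$; this feeds into the crude bound $n\int_{s/a}^m(e^{cx}-1)\,dF\le ne^{cm}\F(s/a)$ and the choice $\epsilon'=\epsilon+(B-1+A\log a-\log\epsilon)/a$ finishes.
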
 

\vskip4mm\noindent
Theorem \ref{thm:3.2} shows that distributions with
intermediate regularly varying tails are subexponential. In fact it is
known that the subexponential class contains those $F$ with dominated
varying right tails and satisfying \eqref{e:1.5} (Goldie (1978)).
\vskip4mm\noindent
\begin{theorem} \label{thm:3.3}  Suppose $P[|X|>x] \in RV_{-\alpha}$,
where $\alpha\ge 0$, $p=\sup_{x\ge 0}\dfrac{F(-x)}{\F(x)}<\infty$
and $t_n$ satisfies $n\F(t_n)\rightarrow 0$.  Then any one of the
following implies \eqref{e:2.6}:
\begin{itemize}
\item[(i)] $0\le\alpha<1$.
\vskip4pt
\item[(ii)] $1\le\alpha<2$ and 
{$\displaystyle \lim_{n\to\infty}{n\over t_n}\mu_1(t_n)=0$.}
\vskip4pt
\item[(iii)] $\alpha\ge 2$ and 
{$\displaystyle \lim_{n\to\infty}{nE(X)\over t_n}=
\lim_{n\to\infty}{n\mu_2(t_n)\log t_n\over t_n^2}=0$.}
\end{itemize}
\end{theorem}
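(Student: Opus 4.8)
The plan is to derive Theorem~\ref{thm:3.3} from the main result, Theorem~\ref{thm:2.1}, by producing a suitable representation \eqref{e:2.1} and then checking \eqref{e:2.3}--\eqref{e:2.5} in each of the three regimes. The first step is the representation. Since $P[|X|>x]\in RV_{-\alpha}$ and $p<\infty$ give $\F(x)\le P[|X|>x]\le(1+p)\F(x)$, we have $\F\in RV_{-\alpha}$, say $\F(t)=t^{-\alpha}L(t)$ with $L$ slowly varying, so $\psi(t)=-\log\F(t)=\alpha\log t-\log L(t)\sim\alpha\log t$. To get \eqref{e:2.1} with $\eta(t)\downarrow 0$ I would invoke the smooth-variation theorem: replace $L$ by a smooth normalised $\wt L\sim L$ whose auxiliary function is eventually monotone, set $\eta(t)=\alpha/t-\wt L'(t)/\wt L(t)$ so that $\eta\downarrow 0$ with $t\eta(t)\to\alpha$ (hence $\eta\in RV_{-1}$), and absorb the difference $\log\wt L-\log L\to 0$ into $b(t)$, which then converges; convergence of $b$ makes \eqref{e:2.2} automatic. (For $\alpha=0$ this gives $t\eta(t)\to 0$.)

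The single observation that organises all the estimates is that, with $a=a_n(s)=\psi(s)-\log n$, one has the identity $n\F(s)=e^{-a}$, and that for $s\ge t_n$ the exponent is bounded below by $-\log\big(n\F(t_n)\big)\to\infty$. Consequently any quantity of the form ``$n\F(s)$ times a fixed power of $a$'' equals $a^{\beta}e^{-a}$, which tends to $0$ \emph{uniformly} over $s\ge t_n$ once $t_n$ is large, because $a^{\beta}e^{-a}$ is eventually decreasing in $a$. Using this, the structural conditions \eqref{e:2.4} and \eqref{e:2.5} come almost for free: by Remark~3 they reduce to $t\eta(t)\log(t\eta(t))=o(\psi(t))$ and $t\eta(t)\F(t/\psi(t))\to 0$, the first holding because $t\eta(t)\to\alpha$ is bounded while $\psi(t)\to\infty$, and the second following from Lemma~\ref{lm:2.2}(ii) (applicable by Remark~4 since $\eta$ is monotone and in $RV_{-1}$). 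For the given $\{t_n\}$ I would argue \eqref{e:2.5} directly: $1\vee s\eta(\lambda s)$ is bounded (its limit is $1\vee(\alpha/\lambda)$), so it suffices that $n\F(s/a)\to 0$, and Potter's bounds give $\F(s/a)/\F(s)\le C\,a^{\alpha+\delta}$ for large $s$, whence $n\F(s/a)\le C\,a^{\alpha+\delta}e^{-a}\to 0$.

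The substance is \eqref{e:2.3}, which by Remark~1 only requires one admissible truncation $w\ge s/a$; I would take $w=s$ throughout (legitimate as $a\ge 1$ eventually). Here I use that $s\wedge\eta^{-1}(\lambda s)\ge c\,s$ for some $c>0$ (since $\eta^{-1}(\lambda s)=1/\eta(\lambda s)\sim\lambda s/\alpha$ when $\alpha>0$ and dominates $s$ when $\alpha=0$) and the left-tail bound $F(-s)\le p\F(s)$, so $nF(-s)\le p\,e^{-a}\to 0$. It then remains to control $n|\mu_1(s)|/s$ and $na\mu_2(s)/s^2$ via Karamata's theorem. In case (i), $0\le\alpha<1$, one has $\mu_1(s)=O(s\F(s))$ and $\mu_2(s)=O(s^2\F(s))$, so the two terms are $O(e^{-a})$ and $O(a\,e^{-a})$ and no extra hypothesis is needed. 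In case (ii), $1\le\alpha<2$, still $\mu_2(s)=O(s^2\F(s))$ yields $O(a\,e^{-a})$, while $\mu_1(s)$ is now bounded (or slowly divergent at $\alpha=1$) and $n|\mu_1(s)|/s$ is governed uniformly by the hypothesis $n\mu_1(t_n)/t_n\to 0$. In case (iii), $\alpha\ge 2$, the mean term is again handled by $nE(X)/t_n\to 0$; as $\mu_2$ is no longer tail-dominated, I use $a\le\psi(s)\le C\log s$ to write $na\mu_2(s)/s^2\le C\,n\mu_2(s)\log s/s^2$, and the eventual monotonicity of $\mu_2(s)\log s/s^2$ reduces the supremum to $s=t_n$, controlled by $n\mu_2(t_n)\log t_n/t_n^2\to 0$.

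The hard part will not be any single estimate but the uniformity and the borderline indices. Karamata's asymptotics for $\mu_1,\mu_2$ must be promoted to bounds that hold uniformly in $s\ge t_n$ (again via Potter's bounds), and the cases $\alpha=1$ and $\alpha=2$ --- where the relevant truncated moment is slowly varying rather than asymptotically a multiple of $s^{j}\F(s)$ --- are precisely where the extra hypotheses in (ii) and (iii) become indispensable rather than cosmetic. I also regard the construction of the monotone representation $\eta\downarrow 0$ with $t\eta(t)\to\alpha$ as a genuine (if standard) technical point, since it is what lets Remark~4 and Lemma~\ref{lm:2.2} apply. Granting these, all of \eqref{e:2.3}--\eqref{e:2.5} hold and Theorem~\ref{thm:2.1} yields \eqref{e:2.6}.
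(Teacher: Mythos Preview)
Your route differs from the paper's in one structural respect: you apply Theorem~\ref{thm:2.1} directly, whereas the paper goes through Theorem~\ref{thm:3.2}. Since $P[|X|>x]\in RV_{-\alpha}$ makes $\F$ intermediate regularly varying (and dominated varying), Theorem~\ref{thm:3.2} reduces the problem to verifying just two things: $S_n/t_n\Povr 0$ and condition~\eqref{e:3.4}. The paper then checks these in each regime using exactly the Karamata relations you invoke for $\mu_1,\mu_2$, and in case~(iii) the same ``almost decreasing'' argument for $\mu_2(s)\log s/s^2\in RV_{-2}$. So your case analysis mirrors the paper's closely; the difference is what you have to set up before you get there.

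The reason the paper detours through Theorem~\ref{thm:3.2} is precisely the point you flag as ``genuine (if standard)'': producing a representation \eqref{e:2.1} with $\eta\downarrow 0$. For dominated varying tails the Karamata-type representation (BGT, Theorem~2.2.7) only yields $b$ bounded and $0\le t\eta(t)\le A$, \emph{not} $\eta$ monotone, and the proof of Theorem~\ref{thm:3.2} explicitly modifies the convexity bound \eqref{e:4.18}--\eqref{e:4.19} to cope with this. Your appeal to the Smooth Variation Theorem does not quite close the gap: it furnishes a $C^\infty$ normalised $\wt L\sim L$ with $t\wt L'(t)/\wt L(t)\to 0$, but that alone does not force $\eta(t)=\alpha/t-\wt L'(t)/\wt L(t)$ to be eventually decreasing, and the naive alternative $\eta(t)=\alpha/t$ fails because then $b(t)=-\log L(t)+\text{const}$ need not be bounded. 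This is not fatal --- a monotone $\eta$ with bounded $b$ can be manufactured with more work when $\alpha>0$ --- but it is less routine than you suggest, and the paper's path via Theorem~\ref{thm:3.2} was designed exactly to avoid it. A secondary simplification the paper gains is that the $\mu_1$ term never needs uniform control in $s\ge t_n$: in Theorem~\ref{thm:3.2} the centering hypothesis enters only through $S_n/t_n\Povr 0$, whereas in your direct verification of \eqref{e:2.3} you must argue separately that $n|\mu_1(s)|/s$ is dominated by its value at $s=t_n$, which at $\alpha=1$ is not entirely automatic.
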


\vskip4mm\noindent
In connection with Theorem 3.3, Heyde (1968) considered the infinite variance
case and Nagaev (1969b) considered the finite variance case. Our method, which
is a mixture of theirs, has several advantages. First we are able to
study these two cases in a unifying way, whereas the methods in Heyde (1968)
and Nagaev (1969b) are essentially different. For the infinite variance case,
our result is new since Heyde (1968) only considered large deviations of 
$|S_n|$. Also neither author considered the cases where $\alpha = 0, 1$ or 2.
\vskip5mm

\section{Proofs}
\vskip4mm\noindent
{\bf proof of Theorem \ref{thm:1.1}.}  If $F \in \CS$, then \eqref{e:1.5} holds by definition.
To show both \eqref{e:1.7} and \eqref{e:1.8} we simply choose $t_n$ by \eqref{e:1.6} to satisfy
$$
\sup_{s\ge t_n} \left| {P[S_n > s] \over n\F(s)} -1\right| \leq {1 \over n}.
$$
\par
Suppose instead that \eqref{e:1.5} holds and such $t_n$ exist that \eqref{e:1.7} holds.
Choose $n_0$ so that for each $n \geq n_0$,
$$
\sup_{s \geq t_n} {P[S_n > s] \over n\F(s)} \leq 1 + \epsilon.
$$
\par \noindent
Let $F_n(x) = P[S_n \leq x]$. Equation \eqref{e:1.5} is also satisfied with $F_n$
replacing $F$ (cf.\ \hbox{Embrechts} and Goldie (1980); with extension by
Willekens (1986)).
Using this fact and Fatou's Lemma,
\begin{align} \label{e:4.1}
\begin{split}
\liminf_{x \to \infty} {\F_{2n}(x) \over \F_2(x)}
&\ge\liminf_{x\to\infty}\int_{-\infty}^{x/2}{\F_{2n-2}(x-u)\over\F_2(x)}F_2(du)
+\liminf_{x\to\infty}\int_{-\infty}^{x/2}{\F_2(x-u)\over\F_2(x)}F_{2n-2}(du) \\
&\geq\liminf_{x \to \infty} {\F_{2n-2}(x) \over \F_2(x)} + 1.
\end{split}
\end{align}
By induction, $\liminf_{x \to \infty} {\F_{2n}(x) \over n \F_2(x)} \geq 1.$
Choose successively, therefore, $y_n \ge \max(t_{2n},y_{n-1})$ so that
$$
\inf_{y \geq y_n} {\F_{2n}(y) \over n\F_2(y)} \geq 1 - \epsilon.
$$
\par
Now let $x_k \to \infty$ and define $m = \sup \{n: x_k \geq y_{n} \}$. Then
for $k$ large enough so that $2m > n_0$,
$$
{\F_2(x_k) \over 2\F(x_k)}=
{m\F_2(x_k)\over\F_{2m}(x_k)}{\F_{2m}(x_k)\over 2m\F(x_k)}
\leq{1 + \epsilon \over 1 - \epsilon}.
$$
That is,
$$
\limsup_{k \to \infty} {\F_2(x_k) \over 2\F(x_k)} \leq 1.
$$
\par \noindent
By the same use of Fatou's Lemma that gave \eqref{e:4.1},
$$
\liminf_{k \to \infty} {\F_2(x_k) \over 2\F(x_k)} \geq 1.
$$
\par \noindent
Since the sequence $x_k$ is arbitrary, then \eqref{e:1.4} holds and $F\in\CS$.
\qed
\vskip4mm

We need the following lemma in the proof of Theorem \ref{thm:2.1}.
\vskip4mm\noindent
\begin{lemma} \label{lm:4.1} 
Let $1\le y_n\le x_n$. There exists $z_n$ such that
$x_n/z_n\to\infty$, $z_n/y_n\to\infty$ and $x_n/(y_n\log z_n)\to\infty$ 
if and only if $x_n/y_n\to\infty$ and $x_n/(y_n\log y_n)\to\infty$.
\end{lemma}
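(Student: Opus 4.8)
The plan is to handle the two implications separately and to phrase everything in terms of the single ratio $a_n=x_n/y_n$. First I would record a reduction: since $y_n\ge 1$ we have $\log y_n\ge 0$, so the condition $x_n/(y_n\log y_n)\to\infty$ says exactly that $\log y_n=o(a_n)$ (the indices with $y_n=1$ being vacuous). Hence the right-hand side of the equivalence is precisely ``$a_n\to\infty$ and $\log y_n=o(a_n)$.'' Making this reduction at the outset is what renders both directions transparent.

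For the forward (``only if'') direction I would assume such a $z_n$ exists and merely chase inequalities. Writing $x_n/y_n=(x_n/z_n)(z_n/y_n)$ and using that both factors tend to infinity gives $x_n/y_n\to\infty$ at once. For the remaining conclusion, $z_n/y_n\to\infty$ together with $y_n\ge 1$ forces $z_n\to\infty$ and $z_n\ge y_n$ eventually, so $\log z_n\ge\log y_n\ge 0$; therefore $x_n/(y_n\log y_n)\ge x_n/(y_n\log z_n)\to\infty$. No construction is needed here, only bookkeeping.

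The substance is the reverse (``if'') direction, where I must exhibit $z_n$. My proposed choice is the geometric mean $z_n=\sqrt{x_n y_n}$, which automatically satisfies $y_n\le z_n\le x_n$ because $1\le y_n\le x_n$. Then $z_n/y_n=x_n/z_n=\sqrt{a_n}\to\infty$, disposing of the first two required properties simultaneously. For the third property I would bound $\log z_n=\tfrac12(\log x_n+\log y_n)\le\log x_n$ and reduce to showing $a_n/\log x_n\to\infty$; writing $\log x_n=\log a_n+\log y_n$ and using that $\log a_n=o(a_n)$ (automatic from $a_n\to\infty$) together with $\log y_n=o(a_n)$ (the reduced hypothesis) yields $\log x_n=o(a_n)$, which is exactly what is needed.

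I expect the only genuine obstacle to be this third condition in the reverse direction: one must keep $z_n/y_n\to\infty$ while simultaneously keeping $\log z_n$ small relative to $x_n/y_n$, and these demands pull in opposite directions. The geometric mean resolves the tension because its logarithm is controlled by $\log x_n$, and the hypothesis $x_n/(y_n\log y_n)\to\infty$ is precisely the input preventing $\log x_n$ (through $\log y_n$) from growing as fast as $a_n$. Finally I would treat the degenerate indices $y_n=1$ separately, where $z_n=\sqrt{x_n}$ and every verification is immediate.
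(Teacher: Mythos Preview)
Your argument is correct. The structure matches the paper's: the ``only if'' direction is dismissed as easy, and for the ``if'' direction one writes down an explicit $z_n$. Your choice $z_n=\sqrt{x_ny_n}$ is in fact the second branch of the paper's construction
\[
z_n=\exp\Bigl(\min\bigl((a_n\log y_n)^{1/2},\ \tfrac12(\log x_n+\log y_n)\bigr)\Bigr),
\]
and you show cleanly that this branch alone already does the job: since $\log z_n=\tfrac12(\log a_n+2\log y_n)$ and both $\log a_n$ and $\log y_n$ are $o(a_n)$, the third condition follows. So your construction is a genuine simplification of the paper's, while the paper merely states its $z_n$ without verification; the trade-off is that the paper's first branch makes the third condition immediate (because then $a_n/\log z_n=\sqrt{a_n/\log y_n}$), at the cost of a messier check of $z_n/y_n\to\infty$.
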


\begin{proof} The sufficiency is obvious. For the necessary part, let
$$
   v_n=\min\left(\left({x_n\log y_n\over y_n}\right)^{1/2}\!\!\!,\,\,\,
   {1\over 2}(\log x_n+\log y_n)\right)
$$
and $z_n=e^{v_n}$.
\end{proof}
\vskip4mm\noindent
{\bf Proof of Theorem \ref{thm:2.1}.}
We shall proceed by first proving
\begin{align} \label{e:4.2}
   \limsup_{n\to\infty}\sup_{s\ge t_n}{P[S_n>s]\over n\F(s)}\le 1
\end{align}
and then
\begin{align} \label{e:4.3}
   \liminf_{n\to\infty}\inf_{s\ge t_n}{P[S_n>s]\over n\F(s)}\ge 1.
\end{align}

Let $\epsilon=\epsilon_n(s)$ and $\epsilon'=\epsilon'_n(s)$ be
functions which vanish uniformly in $s\ge t_n$, as $n\to\infty$. Specific
choices for $\epsilon$ and $\epsilon'$ will be made later. Define
$$
   m=m_n(s)=(1-\epsilon')s.
$$

Let $S_n'=\sum_{i=1}^n X_i1_{X_i\le m}$. Then
\begin{align} \label{e:4.4}
\begin{split}
{P[S_n>s]\over n\F(s)}
   &\le {nP[X_1>m]+P[S_n'>s]\over n\F(s)} \\
   &= {\F(m)\over \F(s)}+{P[S_n'>s]\over n\F(s)}.
\end{split}
\end{align}
By Markov's inequality,
$$
   {P[S_n'>s]\over n\F(s)}
   \le\exp\left(n\int_{-\infty}^m(e^{cx}-1)F(dx)-cs+a\right)
$$
for any $c>0$. In particular, the choice $c=(1+\epsilon)a/s$ gives
\begin{align} \label{e:4.5}
\begin{split}
  {P[S_n'>s]\over n\F(s)}
   &\le\exp\left(n\int_{-\infty}^m(e^{(1+\epsilon)ax\over s}-1)F(dx)
      -\epsilon a\right) \\
   &\le\exp\left(n\int_{-w}^{s/a}(e^{(1+\epsilon)ax\over s}-1)F(dx)
      +n\int_{s/a}^m(e^{(1+\epsilon)ax\over s}-1)F(dx)
      -\epsilon a\right)
\end{split}
\end{align}
for any $w > 0$. In the following, let $w = w_n(s)$ be chosen by \eqref{e:2.3'}.
In view of \eqref{e:4.4} and \eqref{e:4.5}, condition \eqref{e:4.2} follows from choosing $\epsilon$ and
$\epsilon'$ so that
\begin{align} \label{e:4.6}
   \lim_{n\to\infty}\sup_{s\ge t_n}{\F(m)\over\F(s)}= 1,
\end{align}
\begin{align} \label{e:4.7}
   \limsup_{n\to\infty}\sup_{s\ge t_n}{n\over\epsilon a}
      \int_{-w}^{s/a}(e^{(1+\epsilon)ax\over s}-1)
      F(dx) \le \delta 
\end{align}
\begin{align} \label{e:4.8}
   \limsup_{n\to\infty}\sup_{s\ge t_n}{n\over \epsilon a}\int_{s/a}^m
(e^{(1+\epsilon)ax\over s}-1) F(dx) < 1- \delta
\end{align}
and
\begin{align} \label{e:4.9}
\epsilon a \to \infty \text{ as } n\to\infty 
\end{align}
for some $\delta\in (0,1)$.  To this end, define
$$
   B=1+\sup_{u\ge t\ge 0}|b(t)-b(u)|
$$
and choose $z_n(s)$ according to Lemma \ref{lm:4.1} with
$x_n(s)=a_n(s)$ and $y_n(s)=1\vee s\eta(\lambda s)$. Fix $\delta\in (0,
1-e^{-2})$ and define
\begin{align} \label{e:4.10}
   \epsilon=\max\left(z_n(s)^{-1},ne^{B+3}\F(s/a),
	 {2n\over\delta}\left({|\mu_1(w)|\over s}
	 + {e^{2}\mu_2(w)\over s^2/a}\right)
	 \right).
\end{align}
Using \eqref{e:2.3'}, \eqref{e:2.4} and \eqref{e:2.5} and Lemma \ref{lm:4.1} we have, uniformly for $s\ge t_n$,
\begin{align} \label{e:4.11}
\begin{split}
   (1\vee s\eta(\lambda s))\epsilon
   &\le\max\Bigg({y_n(s)\over z_n(s)},
   ne^{B+3}(1\vee s\eta(\lambda s))\F(s/a),
  {2n(|\mu_1(w)|+e^2a\mu_2(w)/s)\over
	   \delta(s\wedge\eta^{-1}(\lambda s))}\Bigg)\\
   &\to 0, 
   \end{split}
   \end{align}
\begin{align} \label{e:4.12}
\epsilon a\ge{x_n(s)\over z_n(s)}&\to\infty
\end{align}
and
\begin{align} \label{e:4.13}
{(-\log\epsilon)(1\vee s\eta(\lambda s))\over a}
   &\le{y_n(s)\log z_n(s)\over x_n(s)}\to 0.
\end{align}
Now choose
$$
   \epsilon'={\epsilon+(2B-\log\epsilon)/a\over 1-s\eta(\lambda s)/a}.
$$
Thus
\begin{align} \label{e:4.14}
   (\epsilon'-\epsilon)a-\epsilon's\eta(\lambda s)-2B=-\log\epsilon.
\end{align}
In addition, from \eqref{e:2.4}, \eqref{e:4.11} and \eqref{e:4.13} we have $\epsilon'\to 0$
and with the monotonicity of $\eta$,
\begin{align} \label{e:4.15}
s\eta(m)\epsilon'&\le(1\vee s\eta(\lambda s))\epsilon'\cr
   &={(1\vee s\eta(\lambda s))(\epsilon+(2B-\log\epsilon)/a)
   \over 1-s\eta(\lambda s)/a}\to 0.
\end{align}

We now show \eqref{e:4.6}--\eqref{e:4.8}, since \eqref{e:4.9} already follows from \eqref{e:4.12}. From \eqref{e:4.15}
and the assumption on $b$,
\begin{align} \label{e:4.16}
   \psi(s)-\psi(m)\le b(s)-b(m)+\epsilon's\eta(m)\to 0, 
\end{align}
uniformly for $s\ge t_n$, which is \eqref{e:4.6}. Using $w\ge s/a$, \eqref{e:4.10}
and a Taylor expansion, for large enough $n$,
$$
   n\int_{-w}^{s/a}(e^{(1+\epsilon)ax\over s}-1)
     F(dx)
   \le 2n{\mu_1(w)\over s/a}
   +2ne^2{\mu_2(w)\over (s/a)^2}\le \delta a\epsilon,
$$
uniformly for $s\ge t_n$, which is \eqref{e:4.7}.

Integrating by parts, we get
\begin{align} \label{e:4.17}
n\int_{s/a}^m (e^{(1+\epsilon)ax\over s}-1)F(dx)
   \le n{(1+\epsilon)a\over s}
      \int_{s/a}^me^{{(1+\epsilon)ax\over s}-\psi(x)}\,dx
      +ne^{(1+\epsilon)}\F(s/a). 
\end{align}
Since $\eta$ is decreasing, ${(1+\epsilon)ax\over s}-(\psi(x)-b(x))$ is convex
and hence
\begin{align} \label{e:4.18}
\sup_{s/a\le x\le m} e^{{(1+\epsilon)ax\over s}-\psi(x)}
\le \max\left(e^{{(1+\epsilon)ma\over s}-\psi(m)+B-1},
    e^{B+\epsilon}\F(s/a)\right).
\end{align}
By \eqref{e:4.17}, \eqref{e:4.18} and the fact that $(1+\epsilon)m\le s$, we obtain the bound
\begin{align} \label{e:4.19}
\begin{split}
n\int_{s/a}^m (e^{(1+\epsilon)ax\over s}-1)F(dx)
   &\le na\max\left(e^{{(1+\epsilon)ma\over s}-\psi(m)+B-1},
      e^{B+\epsilon}\F(s/a)\right)+ne^{(1+\epsilon)}\F(s/a)\\
   &\le a\max\left(e^{-(\epsilon'-\epsilon)a+\epsilon's\eta(\lambda s)+2B-2},
      e^{B+1}n\F(s/a)\right)
      +ne^2\F(s/a)\\
   &=e^{-2}\epsilon a+ne^2\F(s/a),
   \end{split}
\end{align}
where the final equality comes from \eqref{e:4.10} and \eqref{e:4.14}. The bound in \eqref{e:4.8} follows
from \eqref{e:2.5} and that completes the proof of \eqref{e:4.2}.

Finally, we verify \eqref{e:4.3}. Let $m'=s+\zeta(s\wedge\eta^{-1}(\lambda s))$
where $\zeta$ is any
fixed positive constant. By Bonferroni's inequality,
\begin{align} \label{e:4.20}
\begin{split}
 P[S_n>s]&\ge P[S_n>s,\ \max_{1\le i\le n}X_i>m'] \\
   &\ge\sum_{i=1}^n P[S_n>s,\ X_i>m']
      \,-\!\!\!\sum_{1\le i<j\le n}\!\!P[S_n>s,\ X_i>m',\ X_j>m']\\
   &\ge n\F(m')\left(P[S_{n-1}>-\zeta(s\wedge\eta^{-1}(\lambda s))]
   -{n\over 2}\F(s)\right).
   \end{split}
\end{align}
Note that $n\F(s)\to 0$ by \eqref{e:2.4}, and if we let
$$\gamma(\zeta)=\limsup_{t\to\infty}\sup_{0\le u\le\zeta}(b((1+u)t)-b(t))$$
then
\begin{align} \label{e:4.21}
  \liminf_{n\to\infty}\inf_{s\ge t_n}{\F(m')\over \F(s)}
    \ge e^{-\zeta-\gamma(\zeta)}
\end{align}
(cf.\ the derivation of \eqref{e:4.16}).

By Remark 2 of Section 2,
$$
\sup_{s\ge t_n} {S_{n-1}\over s\wedge \eta^{-1}(\lambda s)} \Povr 0.
$$
Hence
$$
\inf_{s\ge t_n}P[S_{n-1}\ge -\zeta(s\wedge\eta^{-1}(\lambda s))] \to 1
$$
and by \eqref{e:4.20} and \eqref{e:4.21},
\begin{align*}
\liminf_{n\to\infty}\inf_{s\ge t_n}{P[S_n>s]\over n\F(s)}
  &\ge\liminf_{n\to\infty}\inf_{s\ge t_n}{\F(m')\over \F(s)} 
   \left(P[S_{n-1}>-\zeta(s\wedge\eta^{-1}(\lambda s))]
   -{n\over 2}\F(s)\right) \\
  &\ge e^{-\zeta-\gamma(\zeta)}.
\end{align*}
Condition \eqref{e:4.3} follows from this since $\zeta > 0$ is arbitrary and
$\gamma(\zeta)\to 0$ as $\zeta\to 0$ by \eqref{e:2.2}.\hfil\break
\qed
\vskip4mm\noindent
{\bf Proof of Lemma \ref{lm:2.2}.}
Define $B_1=\inf_{u\ge 0}b(u)$ and $\psi_1(t)=\psi(t)-b(t)+B_1$.
Also define $B_2=\sup_{u\ge 0}b(u)$ and $\psi_2(t)=\psi(t)-b(t)+B_2$. 

(i) Let $\xi(t)=\exp(\log^2\psi_1(t))$. Then, for large enough $t$,
\begin{align*}
   {2\eta(2t)\log\psi(2t)\over\psi(2t)}
   &\le{2\over\xi(t)}\int_t^{2t}
   {\eta(u)(\log\psi_1(u))\xi(u)\over\psi_1(u)}\,du\cr
   &={\xi(2t)\over\xi(t)}-1\\
   &\le{e^{\log^2\psi(2t)}\over e^{\log^2\psi(t)}}
   e^{-2\log\psi(t)\log(1-(B_2-B_1)/\psi(t))}-1\to 0.
\end{align*}

(ii) From \eqref{e:2.7} and \eqref{e:2.8} choose $t_0$ so that
$$
   \left(1-{t\eta(t)\over\psi_2(t)}\right){\eta(t/\psi_2(t))\over\eta(t)}\ge 1,
   \qquad\qquad\text{for\ all\ $t\ge t_0$.} 
$$
Then
\begin{align*}
   \psi(t/\psi(t))&\ge\psi_2(t/\psi_2(t))+B_1-B_2\\
   &=\psi_2(t_0/\psi_2(t_0))+B_1-B_2
    +\int_{t_0}^t\left(1-{u\eta(u)\over\psi_2(u)}
   \right){\eta(u/\psi_2(u))\over\psi_2(u)}\,du\\
   &\ge\psi_2(t_0/\psi_2(t_0))+B_1-B_2
     +\int_{t_0}^t{\eta(u)\over\psi_2(u)}\,du\\
   &\ge\psi_2(t_0/\psi_2(t_0))+B_1-B_2-\log\psi_2(t_0)+\log\psi(t).
\end{align*}
Hence
$$
   t\eta(t)\F(t/\psi(t))
   ={t\eta(t)\over\psi(t)}e^{\log\psi(t)-\psi(t/\psi(t))}\to 0.
$$
\qed
\vskip4mm\noindent
{\bf Proof of Theorem \ref{thm:3.1}.} Condition \eqref{e:3.2}, $t\eta(t)$ bounded away from 0 
and $t^{2+\delta}\F(t)\to 0$ imply
$$
\limsup_{n\to\infty}\sup_{s\ge t_n}{\log n\over\psi(s)}\le{2\over 2+\delta}
$$
so that \eqref{e:2.4} follows from \eqref{e:3.1}. In addition, $\psi(t)$ must be slowly varying
so 
\begin{align*}
\limsup_{s\ge t_n}ns\eta(\lambda s)\left(\F(s/a)+F(-s/a)\right)
   &\le{na\eta(\lambda s)\over s}{\psi^{1+\delta}(s)\over s^\delta}
   (s/a)^{2+\delta}\left(\F(s/a)+F(-s/a)\right)
    \\
   &\to 0.
\end{align*}

Furthermore,
$$
\sup_{s\ge t_n}{na\mu_2(s)/s\over s\wedge\eta^{-1}(\lambda s)}\to 0.
$$
Therefore, \eqref{e:2.3} and \eqref{e:2.5} hold.
\qed
\vskip4mm\noindent
{\bf Proof of Theorem \ref{thm:3.2}.} By the dominated variation assumption, (2.1) 
holds with $b$ measurable and bounded and $\eta$ measurable and satisfying
\begin{align} \label{e:4.23}
0\le t\eta(t)\le A \text{ for all } t>0, 
\end{align}
for some finite $A$ (cf.\ Bingham, Goldie and Teugels (1987), Theorem 2.2.7).

Conditions \eqref{e:2.3} and \eqref{e:2.4} are easily checked from \eqref{e:3.4} and the assumption
${S_n\over t_n}\Povr 0$ (cf.\ Lo\`eve (1955), p. 317). It is actually not
necessary to check \eqref{e:2.5} for
this proof though in fact something stronger holds (see below). The basic 
structure in the argument for Theorem 2.1 may be followed, with a slight
difference. We focus on the relevant distinction here.

Since $\eta$ is not necessarily monotone, the bound in \eqref{e:4.19} must be modified.
In fact, using \eqref{e:4.23},
\begin{align*}
   n\int_{s/a}^m (e^{(1+\epsilon)ax\over s}-1)F(dx)
   &\le ne^{(1+\epsilon)am\over s}\F(s/a)\\
   &\le e^{-(\epsilon'-\epsilon)a+\psi(s)-\psi(s/a)}\\
   &\le e^{-(\epsilon'-\epsilon)a+B-1+A\log a}.
\end{align*}
So if we let
$$
\epsilon=\max\left(z_n(s)^{-1},{2n(|\mu_1(w)|+e^2a\mu_2(w)/s)\over
	   \delta(s\wedge\eta^{-1}(\lambda s))}\right)
$$
and
$$
\epsilon'=\epsilon+{B-1+A\log a-\log\epsilon\over a}
$$
and we use the idea in \eqref{e:3.3}, then the rest of the proof follows readily.
\qed
\vskip4mm\noindent
{\bf Proof of Theorem \ref{thm:3.3}.} We need only to check the assumptions of
Theorem \ref{thm:3.2} since $P[|X|>x]$ is regularly varying. Note that \eqref{e:2.2} 
holds. Consider the following three cases separately.

i) If $\alpha<1$, we make use of the well known
relationships (cf.\ Bingham, Goldie and Teugels (1987), Theorem
8.1.2) between $\F$, $\mu_1$ and $\mu_2$ to obtain
\begin{align} \label{e:4.24}
\limsup_{t\to\infty}{|\mu_1(t)|\over t\F(t)}\le\dfrac{(1+p)\alpha}{1-\alpha}
<\infty
\end{align}
and
$$\limsup_{t\to\infty}{\mu_2(t)\over t^2\F(t)}\le\dfrac{(1+p)\alpha}{2-\alpha}
<\infty.$$
Hence $n\F(t_n)\to\infty$ implies $S_n/t_n\Povr 0$ and 
$$
   \lim_{n\to\infty}\sup_{s\ge t_n}{na\mu_2(s)\over s^2}
   \le\dfrac{(1+p)\alpha}{2-\alpha}
      \lim_{n\to\infty}\sup_{s\ge t_n}n\F(s)(-\log n\F(s))=0.
$$
\par ii) In case $1\le\alpha<2$, the only difference is that \eqref{e:4.24} fails so
the appropriate centering of $S_n$ must be assumed. Otherwise, 
the proof is as above.
\par iii) For $\alpha\ge 2$, the extra conditions imply $S_n/t_n\Povr 0$.
Also,
$$\lim_{t\to\infty}{-\log\F(t)\over\log t}=\alpha,$$
so that for some finite $C$,
$$
-\log n\F(t) \le C\log t.
$$
Furthermore, $\mu_2(x)\log x/x^2\in RV_{-2}$ and therefore is almost decreasing
(cf.\ Bingham, Goldie and Teugels (1987), p. 41). That is,
$$
\lim_{n\to\infty}\sup_{s\ge t_n}
   {\mu_2(s)\log s/s^2\over\mu_2(t_n)\log t_n/t_n^2}<\infty.
$$
Hence \eqref{e:3.4} follows readily.
\qed
\vskip5mm\noindent
{\bf Acknowledgement.} This paper grew out of a conversation with Professor
Jozef Teugels. We are very grateful to him for his inspiration.
\vskip5mm\noindent
{\bf References.}
\baselineskip=15pt
\def\refmark{\vskip 12pt\noindent\hangindent=20pt\hangafter=1}
\refmark
Bingham, N.H., Goldie, C.M. and Teugels, J.L. (1987).
{\it Regular Variation}, Cambridge University Press.
\refmark
Chistyakov, V.P. (1964).
A theorem on the sums of independent positive random variables and its
applications to branching processes,
{\it Theor. Prob. Appl. 10}, 640--648.
\refmark
Chover, J., Ney, P. and Wainger, S. (1973a). Functions of probability measures,
{\it J. Analyse Math. 26}, 255--302.
\refmark
Chover, J., Ney, P. and Wainger, S. (1973b). Degeneracy properties of
subcritical branching processes, {\it Ann. Prob. 1}, 663--673.
\refmark
Cline, D.B.H. (1986).
Convolution tails, product tails and domains of attraction,
{\it Prob. Theor. Rel. Fields 72}, 529--557.
\refmark
Cline, D.B.H. (1987). 
Convolutions of distributions with exponential and subexponential tails,
{\it J. Austral. Math. Soc. (A) 43}, 347--365.
\refmark 
Cline, D.B.H. (1994). Intermediate regular and $\Pi$-variation,
{\it Proc. London Math. Soc. (3) 68}, 594--616.
\refmark
Embrechts, P. and Goldie, C. M. (1980). On closure and factorization properties
of subexponential distributions, {\it J. Austral. Math. Soc. (A) 29}, 243--256.
\refmark
Embrechts, P. and Goldie, C. M. (1982). On convolution tails, {\it Stoch. Proc.
Appl. 13}, 263--278.
\refmark
Goldie, C.M. (1978). Subexponential distributions and dominated variation
tails, {\it J. Appl. Prob. 15}, 440--442.
\refmark
Goldie, C.M. and Resnick, S.I. (1988).
Distributions that are both subexponential and in the domain of attraction
of an extreme-value distribution, 
{\it Adv. Appl. Prob. 20}, 706--718.
\refmark
Heyde, C.C. (1967a). 
A contribution to the theory of large deviations for sums of independent
random variables,
{\it Z. Wahr. verw. Geb. 7}, 303--308.
\refmark
Heyde, C.C. (1967b).
On large deviation problems for sums of random variables which are not
attracted to the normal law, {\it Ann. Math. Stat. 38}, 1575--1578.
\refmark
Heyde, C.C. (1968).
On large deviation probabilities in the case of attraction to a 
non-normal stable law, {\it Sanky\=a Ser. A 30}, 253--258.
\refmark
Linnik, Yu. V. (1961). 
On the probability of large deviation for the sums of independent variables,
{\it Proc. 4th Berkeley Symp. Math. Stat. Prob. 2}, 289--306.
\refmark
Lo\`eve, M. (1955). {\it Probability Theory,} Van Nostrand, New York.
\refmark
Nagaev, A.V. (1969a).
Integral limit theorems for large deviations when Cram\'er's condition in not
fulfilled I,II, {\it Theor. Prob. Appl. 14}, 51--64, 193--208.
\refmark
Nagaev, A.V. (1969b). Limit theorems for large deviations when Cram\'er's
conditions are violated (in Russian), {\it Izv. Akad. Nauk UzSSR Ser. Fiz.-Mat.
Nauk. 6}, 17--22.
\refmark
Nagaev, S.V. (1973). 
Large deviations of sums of independent random variables,
Trans. Sixth Prague Conf. Information Theory,
Random Processes and Statistical Decision Functions, Acad. Prague, 657--674.
\refmark
Nagaev, S.V. (1979).
Large deviations of sums of independent random variables,
{\it Ann. Prob. 7}, 745--789.
\refmark
Willekens, E. (1986).
Hogere Orde Theorie voor subexponenti\"ele Verdelingen, Thesis (in Dutch),
Math. Dept., Katholieke Univ., Leuven, Belgium.
\vskip2cm

\small
\begin{minipage}[t]{10cm}
Daren B. H. Cline \\
Department of Statistics \\
Texas A\&M University \\
College Station, TX 77843-3143 \\
\\
\verb|dcline@stat.tamu.edu|
\end{minipage} %
\begin{minipage}[t]{10cm}
Tailen Hsing \\
Department of Statistics \\
University of Michigan \\
Ann Arbor, MI 48109-1107 \\
\\
\verb|thsing@umich.edu|
\end{minipage}

\end{document}